\newtheorem{thm}{Theorem}
\newtheorem{cor}[thm]{Corollary}
\newtheorem{rem}[thm]{Remark}
\newtheorem{ex}[thm]{Example}
\def\blfootnote{\xdef\@thefnmark{}\@footnotetext}
\title{Cyclic extensions of Moufang loops induced by semi-automorphisms}
\date{}
\author{STEPHEN M GAGOLA III 
\\ \textit{Department of Algebra, Charles University,}\\ \textit{Sokolovsk\'{a} 83, 186 75 Prague 8, Czech Republic} \\ \textit{gagola@karlin.mff.cuni.cz}}
\begin{document}

\maketitle  

\begin{abstract}
\noindent It is well known that if a group $G$ factorizes as $G = NH$~where 
$H\leq G$ and $N\trianglelefteq G$ then the group structure of $G$ is determined~by~the subgroups $H$ and $N$, the intersection $N\cap H$ and how $H$~acts~on~$N$~with~a homomorphism $\varphi : H \rightarrow$ Aut$(N)$.
Here we generalize the idea by creating extensions using the semi-automorphism group of $N$.
We show that if $G=NH$ is a Moufang loop, $N$ is a normal subloop, and $H=\left<u\right>$ is a finite cyclic group of order coprime to three then the binary operation of $G$ depends only on the binary operation~of~$N$,~the intersection $N\cap H$, and how $u$ permutes the elements of $N$ as a semi-automorphism~of~$N$. 
\end{abstract}


\small
${}$\\
\noindent 2010 Mathematics Subject Classification: \rm 
20E22, 20E34, 20N05.\\
Key words and phrases:  Group extension, Semidirect product, Semi-automorphism, Moufang loop.
\normalsize

\vspace{.1in}

\begin{center}
${}$\\ 1. I{\footnotesize NTRODUCTION}
\end{center}

The concept of group extensions with products of groups is well known.  In particular, semidirect product decompositions have been extremely useful in studying the structure of groups.
If $G$ is a group with a normal subgroup $N$ then $G$ is said to be an \it extension \rm of $G/N$ by $N$. The ``extension problem'' arises when
the groups $N$ and $G/N$ are known and properties of $G$ are to be determined. 
Such a 
problem can be generalized by looking at how extensions can be induced by semi-automorphisms of $N$ rather than automorphisms~of~$N$. This can be viewed in the light of the following theorem. 

\begin{thm}\label{T:main}  Suppose $G$ is a Moufang loop with $G=NH$ where $H\leq G$ and $N\trianglelefteq G$. If $H=\left<u\right>$ is a finite cyclic subgroup of order coprime to three then for any $(xu^m),(yu^n)\in G$ with $x,y\in N$ 
\begin{equation}\label{e:ext} (xu^m)(yu^n)= f^{ \frac{2m+n}{3}}\!\!\left(f^{\frac{-2m-n}{3}}(x)f^{\frac{m-n}{3}}(y)\right)u^{m+n} \end{equation}
where 
\begin{alignat*}{3}
f:\: &N &&\longrightarrow N \\
   &g &&\longmapsto ugu^{-1}
\end{alignat*}
is a semi-automorphism of $N$.  Moreover, $G$ is a group if and only if $N$ is a group and $f$ is an automorphism of $N$.
\end{thm}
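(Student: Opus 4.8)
The plan is to work throughout in the coordinates supplied by the factorization. Since $N\trianglelefteq G$ and $G=NH$ with $H=\langle u\rangle$, every element of $G$ has the form $xu^m$ with $x\in N$ (the ambiguity in such a representation being governed by $N\cap H$). The first task is to verify that $f(g)=ugu^{-1}$ is a well-defined semi-automorphism of $N$. Well-definedness is immediate from Moufang's theorem: $u$ and $g$ lie in a common subgroup, so $ugu^{-1}$ is unambiguous, normality of $N$ keeps its value in $N$, and finiteness of $|u|$ makes $f$ bijective. That $f$ fixes the identity is clear, and the semi-automorphism law $f\bigl(x(yx)\bigr)=f(x)\bigl(f(y)f(x)\bigr)$ follows by applying the Moufang identities to conjugation. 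The role of the hypothesis $\gcd(|H|,3)=1$ surfaces here: the order of $f$ divides $|u|=|H|$, so multiplication by $3$ is invertible modulo that order, and this is precisely what gives meaning to the exponents $\tfrac{2m+n}{3}$, $\tfrac{-2m-n}{3}$, $\tfrac{m-n}{3}$ as honest integer powers of $f$.

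With $f$ understood, I would derive \eqref{e:ext} by reducing the general product to the tractable case in which one factor is a pure power of $u$. Inside any subloop $\langle u,g\rangle$ associativity holds, so $u^m g=f^m(g)\,u^m$, which lets me slide powers of $u$ past single elements of $N$ at the cost of an application of $f$. The genuine difficulty is that $(xu^m)(yu^n)$ mixes three ingredients whose bracketing cannot be rearranged freely; the computation must therefore be organized around the symmetric expression $f^{c}\!\bigl(f^{-c}(x)\,f^{d}(y)\bigr)$, which is exactly the shape the semi-automorphism identity $f(a\cdot b\cdot a)=f(a)\,f(b)\,f(a)$ can control. The particular exponents are forced by requiring the two conjugations to balance so that the Moufang laws close up; tracking them---plausibly by induction on $|m|+|n|$ together with the flexible and Moufang identities---produces \eqref{e:ext}. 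I expect this balancing, rather than any single algebraic step, to be the principal obstacle: the factor of $3$ is intrinsic, reflecting the order-$3$ symmetry that underlies Moufang loops and groups with triality, and keeping the bracketings honest under nonassociativity is where the care must go.

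The final equivalence then follows quickly. If $G$ is a group, then $N$, being a subloop of an associative structure, inherits associativity and is a subgroup, while $f(g)=ugu^{-1}$ is ordinary conjugation and hence restricts to an automorphism of the normal subgroup $N$. Conversely, suppose $N$ is a group and $f\in\operatorname{Aut}(N)$. Then every power $f^{k}$ is an automorphism, so in \eqref{e:ext} the outer map distributes over the inner product; the exponents combine as $\tfrac{2m+n}{3}+\tfrac{-2m-n}{3}=0$ on the $x$-factor and $\tfrac{2m+n}{3}+\tfrac{m-n}{3}=m$ on the $y$-factor, and the formula collapses to
\[
(xu^m)(yu^n)=x\,f^m(y)\,u^{m+n}.
\]
This is precisely the multiplication of the semidirect product $N\rtimes_f\langle u\rangle$, which is a group, so $G$ is a group. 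Reading these two implications contrapositively shows that $G$ fails to be a group as soon as $N$ is nonassociative or $f$ is merely a semi-automorphism, giving the stated ``if and only if''.
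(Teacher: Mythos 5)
Your scaffolding is right (coordinates $xu^m$, integer meaning of the fractional exponents via coprimality to $3$, the final equivalence), but the heart of the theorem --- the actual derivation of Equation~(\ref{e:ext}) --- is never carried out. At the point where the proof should be, you write that the exponents are ``forced by requiring the two conjugations to balance'' and that tracking them, ``plausibly by induction on $|m|+|n|$,'' produces the formula. That is a description of the difficulty, not an argument, and it is not clear the proposed induction can even get started: reducing $(xu^m)(yu^n)$ to products with smaller exponents already requires re-bracketing a mixed product of $x$, $y$ and powers of $u$, which is precisely what is at issue. Diassociativity controls only two generators at a time, so your relation $u^m g = f^m(g)u^m$ (valid inside $\langle u,g\rangle$) cannot by itself move a power of $u$ through a product of two \emph{distinct} elements of $N$. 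The paper's proof is a direct, non-inductive computation whose key device your plan is missing: using the cube root $u^{1/3}\in\langle u\rangle$ (this is where coprimality to $3$ genuinely enters), write $u^m=u^{\frac{2m+n}{3}}u^{\frac{m-n}{3}}$, pad the product on the right by $u^{\frac{m-n}{3}}u^{\frac{-m+n}{3}}=1$, and then apply the Moufang identities a few times to cycle these pieces around the product, each displaced power of $u$ becoming an application of a power of $f$. The exponents in (\ref{e:ext}) are what this computation leaves behind; they are not guessed and then verified.

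Two further points. First, you assert the semi-automorphism property of $f$ ``follows by applying the Moufang identities to conjugation.'' This is not a one-line consequence of those identities: the paper invokes Bruck's theorem that inner mappings of a Moufang loop are pseudo-automorphisms (hence semi-automorphisms); without that citation you would have to prove $u(ghg)u^{-1}=(ugu^{-1})(uhu^{-1})(ugu^{-1})$ for non-associating triples, which takes real work. Second, your treatment of the ``moreover'' clause is fine \emph{granted} (\ref{e:ext}): the collapse to $(xu^m)(yu^n)=x\,f^m(y)\,u^{m+n}$ when $f$ is an automorphism and the three-element associativity check are correct, and this is a legitimate alternative to the paper's shorter route via the special case $x(yu^3)=f\!\left(f^{-1}(x)f^{-1}(y)\right)u^3$. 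But it inherits the gap, since it rests on the very formula that was never established.
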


Just like for groups, such a result can be extremely useful in studying the structure of Moufang loops.
Likewise, this could 
be an 
efficient piece of machinery when solving open problems and classifying Moufang loops of small order.
For example, Theorem $\ref{T:main}$ has already been used to partially prove an open problem proposed by J.D. Phillips showing that every Moufang loop with an order coprime to six has a nontrivial nucleus $\cite{nucleus}$.  Moreover, Theorem \ref{T:main} has been used in \cite{inner} proving that any Moufang loop that can be generated by its cubes has an inner mapping group that can be generated by conjugation maps.  From this it follows that a subloop is normal if and only if it is stabilized by conjugation maps.


Theorem \ref{T:main} also increases the significance of the general study of semi-automorphisms.  Semi-automorphisms of groups were studied by \mbox{F. Dinkines,} I.N. Herstein and M.F.~Ruchte back in the 1950s.
\mbox{F. Dinkines} $\cite{Dinkines}$ showed that a semi-automorphism of either a symmetric group or an alternating group is either an automorphism or an anti-automorphism. 
It was then I.N.~Herstein and M.F. Ruchte $\cite{Herstein}$ who proved that if $G$ is a simple group that contains an element of order 4
then any semi-automorphism of $G$ is either an automorphism or an anti-automorphism.
F.~Dinkines, I.N.~Herstein and M.F. Ruchte also conjectured that any semi-automorphism of a simple group would be either an automorphism or an anti-automorphism.

\begin{center}
${}$\\ 2. M{\footnotesize AIN} R{\footnotesize ESULT}
\end{center}

A \it Moufang loop \rm is a generalization of a group that arises when the associative law is replaced by any of the following equivalent identities:
\vspace{-.03in}
\begin{alignat*}{5}
 (xy)(zx) &= x((yz)x), \:\:
 &&(xy)(zx) &&= (x(yz))x,\\
 ((xy)x)z &= x(y(xz)),
 &&((zx)y)x &&= z(x(yx)).
\end{alignat*}
In particular, by Moufang's Theorem, these loops are \it diassociative \rm in the sense that any subloop generated by two elements is associative and therefore a group. 
A \it semi-automorphism \rm of a Moufang loop $G$ is a bijection $f$ of $G$ such that  $f(aba)=f(a)f(b)f(a)$ for any $a,b\in G$. 
The 
set of semi-automorphisms of $G$ is a group under composition and is denoted by SemiAut$(G)$. \\

\noindent \it Proof of Theorem \ref{T:main}. \rm 

It is known by R.H.~Bruck~$\cite{pseudo}$ that any inner map of $G$ is a pseudo-automorphism, and therefore a semi-automorphism of $G$. Thus the map 
\begin{alignat*}{3}
f:\: &N &&\longrightarrow N \\
     &g &&\longmapsto  ugu^{-1}
\end{alignat*}
is a semi-automorphism of $N$.  Furthermore, since the order of $u$ is coprime to three, there exists a unique element of $H$, denoted by $u$\footnotesize$^{1/3}$\normalsize, such that its cube is $u$.  Likewise there exists a cubed root of $f$ in $\left<f\right>$, denoted by $f$\footnotesize$^{1/3}$\normalsize, which maps $g$ 
to $u$\footnotesize$^{1/3}$\normalsize$gu$\footnotesize$^{-1/3}$ \normalsize in $N$. Now since $G$ is a Moufang loop and therefore satisfies the Moufang identities,
\begin{alignat*}{1}
(xu^m)(yu^n) &=  \left(\left(x\left(u^{\frac{2m+n}{3}}u^{\frac{m-n}{3}}\right)\right)\left(yu^n\right)\right)\left(u^{\frac{m-n}{3}}u^{\frac{-m+n}{3}}\right)  \\
&= \left(\left(\left(\left(xu^{\frac{2m+n}{3}}\right)u^{\frac{m-n}{3}}\right)\left(yu^n\right)\right)u^{\frac{m-n}{3}}\right)u^{\frac{-m+n}{3}}  \\
             &= \left(\left(xu^{\frac{2m+n}{3}}\right)\left(u^{\frac{m-n}{3}}\left(yu^n\right)u^{\frac{m-n}{3}}\right)\right)u^{\frac{-m+n}{3}} \\
             &= \left[\left(u^{\frac{2m+n}{3}}\!\!\left(u^{\frac{-2m-n}{3}}xu^{\frac{2m+n}{3}}\right)\right)\left(\left(u^{\frac{m-n}{3}}yu^{\frac{-m+n}{3}}\!\!\right)u^{\frac{2m+n}{3}}\right)\right]u^{\frac{-m+n}{3}} \\
             &= \left[u^{\frac{2m+n}{3}}\left(\left(u^{\frac{-2m-n}{3}}xu^{\frac{2m+n}{3}}\right)\!\!\left(u^{\frac{m-n}{3}}yu^{\frac{-m+n}{3}}\right)\right)u^{\frac{2m+n}{3}}\right]u^{\frac{-m+n}{3}} \\
&= u^{\frac{2m+n}{3}}\left(\left(u^{\frac{-2m-n}{3}}xu^{\frac{2m+n}{3}}\right)\!\!\left(u^{\frac{m-n}{3}}yu^{\frac{-m+n}{3}}\right)\right)\left(u^{\frac{-2m-n}{3}}u^{m+n}\right) \\
             &= \left(u^{\frac{2m+n}{3}}\left(\left(u^{\frac{-2m-n}{3}}xu^{\frac{2m+n}{3}}\right)\!\!\left(u^{\frac{m-n}{3}}yu^{\frac{-m+n}{3}}\right)\right)u^{\frac{-2m-n}{3}}\right)u^{m+n}  \\
             &= f^{ \frac{2m+n}{3}}\!\!\left(f^{\frac{-2m-n}{3}}(x)f^{\frac{m-n}{3}}(y)\right)u^{m+n}   
\end{alignat*}
for any $x,y\in N$.

\pagebreak

Here $G$ is a group if and only if $N$ is a group and $u$ associates with all of the elements of $N$.  
From the first part of the theorem, $x(yu^3)=f\!\left(f^{-1}(x)f^{-1}(y)\right)u^3$ for any $x,y\in N$.  Thus $u^3$, and therefore $u$, associates with all of the elements of $N$ if and only if $f\!\left(f^{-1}(x)f^{-1}(y)\right)u^3=(xy)u^3$ for any $x,y\in N$. Hence, $G$ is a group if and only if $N$ is a group and $f$ is an automorphism of $N$. \hfill $\square$

\begin{rem}
Note that Theorem \ref{T:main} generalizes to cases where $H$ is not necessarily finite of order coprime to three. Namely, if $G=NH$ is contained in a larger Moufang loop $K$ such that $N\trianglelefteq K$, $H=\left<u\right>\leq G$ and $u=v^3$ for some element $v\in K$ then  Equation (\ref{e:ext}) still holds where $f^{\frac{1}{3}}:G\longrightarrow G$ is defined by $f^{\frac{1}{3}}(g)=vgv^{-1}$.   
\end{rem}

\begin{rem}
From the proof of Theorem \ref{T:main} it can be seen that Equation (\ref{e:ext}) holds for any Moufang loop $G$ and any elements $x,y,u\in G$ where $u=v^3$ for some other element $v\in G$ and $f^{\frac{1}{3}}:G\longrightarrow G$ is the map $f^{\frac{1}{3}}(g)=vgv^{-1}$.   
\end{rem}

From Theorem \ref{T:main}, 
it follows 
that if $G = NH$ is a Moufang loop with $N \trianglelefteq G$ 
and $H = \left<u\right>\leq G$  of finite order coprime to three then the
extension depends only on 
how $u$ permutes the elements of $N$ as a semi-automorphism of~$N$.
Also observe that the converse of Theorem \ref{T:main} is not true.  That is, when taking a semi-automorphism of a Moufang loop $N$ and using (1) to construct a cyclic extension, the resulting loop is not always a Moufang loop.  Therefore, it would be interesting to find a larger class of loops that contains the class of Moufang loops and satisfies both Theorem \ref{T:main} and its converse.

It should also be noted that Theorem \ref{T:main} does not carry over to the case where $H$ has an order divisible by three.  It is known by H.~Zassenhaus~$\cite{81}$ along with T.~Kepka and P.~N\v{e}mec~$\cite{Kepka}$ that there are exactly two commutative Moufang loops of order 81 and exponent 3. One is an elementary abelian 3-group and the other of which is nonassociative.  Both can be written as $NH$ where $N$ is an elementary abelian 3-group of order 27 and $H$ is a cyclic group of order~3.  In each case $H$ acts trivially on $N$ proving that the extension depends on more~than just how the elements of $H$ permute the elements of $N$ by conjugation. 

Now suppose that $G=NH$ with $N$ and $H$ as above.  
If $N\cap H~=~1$ then the Moufang loop $G$ is a \it semidirect product \rm of $N$ and $H$ with a homomorphism $\varphi: H\rightarrow$ SemiAut$(N)$ mapping $u^n$ to $g\mapsto u^ngu^{-n}$.
Such a Moufang loop is isomorphic to the \it external semidirect product \rm induced by $\varphi$ 
\vspace{-.03in}
\[ N\rtimes_\varphi H=\left\{(x,u^m)\:|\:x\in N, u^m\in H\right\} \]
where $f=\varphi(u)$ and
\vspace{-.04in}
\begin{alignat}{1} 
(x,u^m)(y,u^n)&=\left(f^{ \frac{2m+n}{3}}\!\!\left(f^{\frac{-2m-n}{3}}(x)f^{\frac{m-n}{3}}(y)\right), u^{m+n}\right) \label{e:semi}
\end{alignat}
for any $(x,u^m),(y,u^n)\in N\rtimes_\varphi H$.

\begin{ex}\label{E:cheinloop}
A nice example of such an extension is a semidirect product $G=NH$, 
originally constructed by O.~Chein~$\cite{Chein}$, where $N$ is a~group~and $H=\left<u\right>$ is of order two.  
O. Chein defined the extension by extending the binary operation of $N$ to all of $G$ satisfying 
\vspace{-.03 in}
\begin{alignat*}{1}
(x u)y&=(xy^{-1})u \\
x(y u)&=(yx)u \\
(x u)(y u)&=y^{-1}x
\end{alignat*}
for any $x,y\in N$. Here one can picture the extension as $G=N \rtimes_\varphi H$ where 
\begin{alignat*}{3}
f=\varphi(u):\: &N &&\longrightarrow N\\
     &\:g &&\longmapsto ugu^{-1}
\end{alignat*}
is the semi-automorphism, namely the anti-automorphism, that inverts all of the elements of $N$.
\end{ex}

\begin{ex}  In $\cite{Rajah}$ A.\:Rajah proved that there exists an odd ordered non-associative Moufang loop of order $pq^3$ where 
$p$ and $q$ are distinct primes if and only if $q\equiv 1$~mod~$p$. For $q\equiv 1$~mod~$p$, A.~Rajah constructed a non-associative Moufang loop of order $pq^3$ with a nonabelian normal subgroup of order $q^3$ and exponent $q$. But, when $p\neq 3$, this is just an external semidirect product $N \rtimes_\varphi H$ where $H=\left<u\right>$ is of order $p$, $N=U_3({\mathbb F}_q)$ (the group of unipotent upper triangular matrices) 
and $\varphi(u)$ is a semi-automorphism of $N$ rather than an automorphism of $N$.  
Here $\varphi(u): N \rightarrow N$ is the map
\normalsize \[ \left(\begin{array}{lll}1 & a & c \\ 0 & 1 & b \\ 0 & 0 & 1\end{array}\right) \longmapsto \left(\begin{array}{lcc}1 & ak^{-1} & ck+ab\frac{k^{-2}-k}{2} \\ 0 & 1\:\:{}^{} & bk^{-1} \\ 0 & 0\:\:{}^{} & 1\:\:{}^{}\end{array}\right) \] \normalsize\it
where $k$ is a fixed element of  ${{}^{}}^{}{\mathbb F}_q$ with order $p$.
\end{ex}

\begin{thm}\label{T:same}
Let $N$ be a Moufang loop and $H$ be a finite cyclic group whose order is coprime to three.  Suppose that $\varphi_1,\varphi_2:H\rightarrow$ SemiAut$(N)$ are homomorphisms that create Moufang loops $G_1=N\rtimes_{\varphi_1}H$ and $G_2=N\rtimes_{\varphi_2}H$ as defined in Equation~(\ref{e:semi}). If $\varphi_2=\phi_\beta \circ \varphi_1 \circ \alpha$ where $\alpha\in$\:Aut$(H)$, $\beta\in$\:Aut$(N)$ and $\phi_\beta$ is conjugation by $\beta$ in SemiAut$(N)$ then the semidirect products $G_1$ and $G_2$ are isomorphic. 
\end{thm}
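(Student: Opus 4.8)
The plan is to exhibit an explicit isomorphism $\Psi:G_1\to G_2$ assembled from $\alpha$ and $\beta$, and to verify that it respects (\ref{e:semi}). Write $f_i=\varphi_i(u)$ for $i=1,2$. Since $H=\langle u\rangle$ is cyclic of order $k$ coprime to three, $\alpha$ is determined by $\alpha(u)=u^t$ with $\gcd(t,k)=1$; let $s$ denote the inverse of $t$ modulo $k$, so that $\alpha^{-1}(u^m)=u^{sm}$. The first step is to convert the hypothesis $\varphi_2=\phi_\beta\circ\varphi_1\circ\alpha$ into a workable relation between $f_1$ and $f_2$. Evaluating at $u^m$ gives $f_2^{m}=\varphi_2(u^m)=\beta\,\varphi_1(u^{tm})\,\beta^{-1}=\beta f_1^{tm}\beta^{-1}$. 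Because $f_1^{k}=\varphi_1(u^k)=\mathrm{id}$, the order of $f_1$ divides $k$ and is coprime to three, so the cube roots $f_i^{1/3}$ in $\langle f_i\rangle$ exist; using $\gcd(t,k)=1$ one checks that $\langle f_2\rangle=\beta\langle f_1\rangle\beta^{-1}$ and, by uniqueness of cube roots, that the identity
\[ f_2^{\,j}=\beta f_1^{\,tj}\beta^{-1} \]
persists for every exponent $j\in\tfrac13\mathbb{Z}$ occurring in (\ref{e:semi}).

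With this identity in hand I would define
\[ \Psi(x,u^m)=\bigl(\beta(x),\,\alpha^{-1}(u^m)\bigr)=\bigl(\beta(x),\,u^{sm}\bigr). \]
Since $\beta$ permutes $N$ and $m\mapsto sm$ permutes $\mathbb{Z}/k\mathbb{Z}$ (as $\gcd(s,k)=1$), $\Psi$ is a bijection, so it remains only to show that it is multiplicative. The second coordinates match immediately, both products yielding $u^{s(m+n)}$. For the first coordinate I would compute the $G_2$-product of $\Psi(x,u^m)$ and $\Psi(y,u^n)$ from (\ref{e:semi}), whose $f_2$-exponents are the divide-by-three expressions built from $m'=sm$ and $n'=sn$, and then substitute $f_2^{\,j}=\beta f_1^{\,tj}\beta^{-1}$. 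The inner conjugating factors $\beta^{-1}$ and $\beta$ can be absorbed because $\beta$ is a genuine automorphism, so that $\beta^{-1}\bigl(\beta(a)\beta(b)\bigr)=ab$; each surviving $f_1$-exponent is thereby multiplied by $t$, and since $tm'=m$ and $tn'=n$ (as $ts\equiv1\pmod k$) the three exponents collapse to $\tfrac{2m+n}{3}$, $\tfrac{-2m-n}{3}$, $\tfrac{m-n}{3}$. What is left is exactly $\beta$ applied to the first coordinate of the $G_1$-product, i.e.\ $\Psi$ of that product.

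The point requiring the most care — and the step I expect to be the main obstacle — is the interaction of conjugation by $\beta$ with the bracketed products inside (\ref{e:semi}). It is essential that $\beta$ be a full automorphism rather than merely a semi-automorphism: the cancellation $\beta^{-1}\bigl(\beta(a)\beta(b)\bigr)=ab$ is precisely what allows the inner $\beta^{\pm1}$'s to pass through the product $f_2^{\cdots}(\beta x)\,f_2^{\cdots}(\beta y)$, and a semi-automorphism need not satisfy this. The remaining work is the bookkeeping that the fractional exponents, read modulo $\mathrm{ord}(f_1)$ under the cube-root convention, transform correctly under multiplication by $t$; this is routine once $ts\equiv1\pmod k$ is invoked, together with the invertibility of $3$ modulo $k$ that makes all the displayed exponents well defined.
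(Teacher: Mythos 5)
Your proposal is correct and takes essentially the same route as the paper: the paper defines the identical bijection $\psi(x,u^m)=(\beta(x),\alpha^{-1}(u^m))$, derives the same key relation (in the form $h_{\alpha^{-1}(u)}^{k}\circ\beta=\beta\circ f_u^{k}$, extended to the cube-root exponents), and cancels the inner $\beta^{\pm 1}$'s exactly as you do, using that $\beta$ is a genuine automorphism. The only difference is bookkeeping: the paper computes the $G_2$-product with respect to the generator $\alpha^{-1}(u)$ and exponents $m,n$, whereas you keep the generator $u$ and track the exponents $sm,sn$ with $ts\equiv 1 \pmod{k}$ --- an equivalent, if slightly more explicit, calculation.
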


\begin{proof}
Let $f_a=\varphi_1(a)$ and $h_a=\varphi_2(a)$ for any $a\in H$ and let $H=\left<u\right>$. Note that 
\[ h_{\alpha^{-1}\!(u)}=\varphi_2(\alpha^{-1}\!(u))=\phi_\beta(\varphi_1(u))=\beta\circ f_u\circ\beta^{-1} \]
and thus $h_{\alpha^{-1}\!(u)}^k\circ\beta = \beta\circ f_u^k$ for any integer $k$.
Now define the bijection 
$$\psi~:~G_1~\longrightarrow~G_2$$ 
by 
$$\psi(x,u^m)=(\beta(x),\alpha^{\mbox{\tiny${-1}^{}$}}\!(u^m)).$$  From this it follows that, for any elements $(x,u^m),(y,u^n)\:\in\:G_1^{\frac{}{}}$,
\begin{alignat*}{1}
\psi&\!\left(x,u^m\right)\psi\!\left(y,u^n\right) = \left(\beta(x),\alpha^{-1}\!\!\left(u^m\right)\right)\left(\beta(y),\alpha^{-1}\!\!\left(u^n\right)\right) \\
  &= \left(\beta(x),\alpha^{-1}\!(u)^m\right)\left(\beta(y),\alpha^{-1}\!(u)^n\right) \\
  &= \left( h_{\alpha^{-1}\!(u)}^{\frac{2m+n}{3}}\!\!\left(h_{\alpha^{-1}\!(u)}^{\frac{-2m-n}{3}}\!(\beta(x))h_{\alpha^{-1}\!(u)}^{\frac{m-n}{3}}\!(\beta(y))\!\right),\alpha^{-1}\!(u)^m\alpha^{-1}\!(u)^n\right) \\
  &= \left( \!\left(\beta\circ\varphi_1(u)\circ\beta^{-1}\right)^{\!\!\frac{2m+n}{3}}\!\!\left(\!\left(\!\beta\circ f_u^{\frac{-2m-n}{3}}\!\right)\!\!(x)\!\left(\!\beta\circ f_u^{\frac{m-n}{3}}\!\right)\!\!(y)\!\right),\alpha^{-1}\!\!\left(u^{m+n}\right)\!\right) \\
  &= \left( \!\left(\beta\circ f_u^{\frac{2m+n}{3}}\circ\beta^{-1}\right)\!\!\left(\!\beta\!\left(f_u^{\frac{-2m-n}{3}}\!(x)f_u^{\frac{m-n}{3}}\!(y)\!\right)\!\right),\alpha^{-1}\!\!\left(u^{m+n}\right)\!\right) \\
  &= \left( \!\beta\!\left( f_u^{\frac{2m+n}{3}}\!\!\left(f_u^{\frac{-2m-n}{3}}\!(x)f_u^{\frac{m-n}{3}}\!(y)\right)\!\right),\alpha^{-1}\!\!\left(u^{m+n}\right)\!\right) \\
  &= \psi\!\left(f_u^{\frac{2m+n}{3}}\!\!\left(f_u^{\frac{-2m-n}{3}}\!(x)f_u^{\frac{m-n}{3}}\!(y)\right),u^{m+n}\right) \\
  &= \psi\!\big(\left(x,u^m\right)\left(y,u^n\right)\big).
\end{alignat*} \normalsize
Hence $N\rtimes_{\varphi_1}H\cong N\rtimes_{\varphi_2}H$.
\end{proof}

Note that Theorem 
$\ref{T:same}$ carries over to nonsplit extensions as long as $\beta|_{{}_{N\cap H}}=\alpha^{-1}|_{{}_{N\cap H}}$.

\begin{cor} 
 If the cyclic groups $\varphi_1(H)$ and $\varphi_2(H)$ are conjugate in the group SemiAut$(N)$ then $N\rtimes_{\varphi_1}H\cong N\rtimes_{\varphi_2}H$.
\end{cor}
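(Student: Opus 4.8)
The plan is to derive the corollary directly from Theorem \ref{T:same} by translating the conjugacy of the two cyclic subgroups into the factorization $\varphi_2 = \phi_\beta \circ \varphi_1 \circ \alpha$ demanded there. First I would fix a generator $u$ of $H$ and an element $\beta \in \text{SemiAut}(N)$ witnessing the conjugacy, so that $\beta\,\varphi_1(H)\,\beta^{-1} = \varphi_2(H)$. Because $\varphi_1(H)$ and $\varphi_2(H)$ are cyclic of the same order, conjugation by $\beta$ must carry the generator $\varphi_1(u)$ to a generator of $\varphi_2(H)$; hence $\beta\,\varphi_1(u)\,\beta^{-1} = \varphi_2(u)^{k}$ for some integer $k$ coprime to $d := |\varphi_2(H)|$.

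Next I would repackage the exponent $k$ as an automorphism of $H$. Since $d$ divides $|H|$, a routine Chinese Remainder argument lets me replace $k$ by an integer $k' \equiv k \pmod d$ with $\gcd(k',|H|)=1$, without altering $\varphi_2(u)^{k}$. Then $\alpha \in \text{Aut}(H)$ defined by $\alpha(u)=u^{k'}$ is a genuine automorphism, and the relation above becomes $\phi_\beta(\varphi_1(u)) = \varphi_2(\alpha(u))$. As both $\phi_\beta\circ\varphi_1$ and $\varphi_2\circ\alpha$ are homomorphisms out of the cyclic group $H$ agreeing on the generator $u$, they coincide, giving $\varphi_2 = \phi_\beta \circ \varphi_1 \circ \alpha^{-1}$ with $\alpha^{-1}\in\text{Aut}(H)$. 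Feeding $\beta$ and $\alpha^{-1}$ into Theorem \ref{T:same} then produces the desired isomorphism $N\rtimes_{\varphi_1}H \cong N\rtimes_{\varphi_2}H$.

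The step I expect to cause trouble is the status of $\beta$. Theorem \ref{T:same} requires the conjugating map to be an \emph{automorphism} of $N$ --- its proof uses $\beta(ab)=\beta(a)\beta(b)$ at the crucial point where a product is pulled back through $\beta^{-1}$ --- whereas conjugacy merely in $\text{SemiAut}(N)$ a priori only supplies a \emph{semi}-automorphism, for which multiplicativity fails in general. So the heart of the matter is to argue that the witnessing $\beta$ may be taken inside $\text{Aut}(N)$ (equivalently, to read the hypothesis as conjugacy by an automorphism); once that is secured, the remaining steps are exactly the bookkeeping described above. If such a choice cannot be arranged, one would instead have to build the isomorphism $\psi$ directly for a semi-automorphic $\beta$, which would demand a genuinely new verification rather than an appeal to Theorem \ref{T:same}.
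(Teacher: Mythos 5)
Your reduction to Theorem \ref{T:same} is exactly the route the paper intends --- the corollary is stated there with no proof at all --- and your bookkeeping is correct: conjugation by $\beta$ restricts to an isomorphism $\varphi_1(H)\rightarrow\varphi_2(H)$, hence carries $\varphi_1(u)$ to a generator $\varphi_2(u)^k$ of $\varphi_2(H)$; since $d=|\varphi_2(H)|$ divides $|H|$, the Chinese Remainder argument produces $k'\equiv k \pmod{d}$ with $\gcd(k',|H|)=1$; the map $u\mapsto u^{k'}$ is then a genuine $\alpha\in$ Aut$(H)$, and two homomorphisms out of the cyclic group $H$ that agree on $u$ coincide, giving $\varphi_2=\phi_\beta\circ\varphi_1\circ\alpha^{-1}$.

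The obstruction you flag is also genuine, and the paper does nothing to remove it. Theorem \ref{T:same} hypothesizes $\beta\in$ Aut$(N)$, and this is used essentially: its proof passes from $\left(\beta\circ f_u^{\frac{-2m-n}{3}}\right)\!(x)\left(\beta\circ f_u^{\frac{m-n}{3}}\right)\!(y)$ to $\beta\!\left(f_u^{\frac{-2m-n}{3}}(x)\,f_u^{\frac{m-n}{3}}(y)\right)$, i.e.\ it invokes $\beta(a)\beta(b)=\beta(ab)$, which fails for a general semi-automorphism. Nor can one simply rerun that proof with a semi-automorphic $\beta$: taking $m=n=0$ shows that any map of the shape $\psi(x,u^m)=(\beta(x),\alpha^{-1}(u^m))$ is a homomorphism only if $\beta(xy)=\beta(x)\beta(y)$ for all $x,y\in N$, i.e.\ only if $\beta\in$ Aut$(N)$. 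So, as literally stated (conjugacy in SemiAut$(N)$), the corollary does not follow from Theorem \ref{T:same}; it does follow, by your argument, under the reading that the subgroups are conjugate by an element of Aut$(N)$, which is evidently what the author has in mind. (The literal statement is not obviously refutable either: for example, the inversion map commutes with every semi-automorphism, since semi-automorphisms preserve inverses by diassociativity, so one cannot build a counterexample by conjugating inversion to an automorphism.) In short, your proposal is as complete as the paper's own treatment, and your diagnosis pinpoints the actual gap rather than a defect in your argument.
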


 Here Theorems $\ref{T:main}$ and $\ref{T:same}$ are likely to stir up more interest in the study of semi-automorphisms.  It wasn't until 1999 that the conjecture made by F.~Dinkines, I.N.~Herstein and M.F.~Ruchte was solved for the case where the simple group is finite.
It was proven by K.I.~Beidar, Y.~Fong, W.F.~Ke and W.R.~Wu \cite{Beidar} that any nontrivial 
semi-endomorphism, and therefore \mbox{any semi-automorphism, of a} finite simple group 
is either an automorphism or an anti-automorphism. 
With this, the following theorem then follows from Theorem~$\ref{T:main}$.

\begin{thm}\label{T:simple group} Suppose $G$ is a finite Moufang loop with a normal subgroup $N$ that is simple.  If $G/N$ is cyclic with an order coprime to six then $G$ is a group. 
\end{thm}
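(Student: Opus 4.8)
The plan is to deduce the result from the ``moreover'' clause of Theorem~\ref{T:main} together with the Beidar--Fong--Ke--Wu classification cited above. The only substantive work is to produce a cyclic complement of the correct order and then to show that conjugation by its generator is an automorphism rather than a proper anti-automorphism of $N$. Since $N$ is a normal subloop that is assumed simple, it is in particular a group, so by the last sentence of Theorem~\ref{T:main} it will suffice to exhibit a generator $u$ with $G=N\langle u\rangle$ and $\langle u\rangle$ of order coprime to three, and then to prove that the semi-automorphism $g\mapsto ugu^{-1}$ is an automorphism.

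First I would arrange the decomposition itself. Pick any $v\in G$ whose image $vN$ generates the cyclic group $G/N$, let $3^{a}$ be the exact power of three dividing the (finite) order of $v$, and set $u=v^{3^{a}}$. Then $\lvert u\rvert$ is coprime to three, while $uN=(vN)^{3^{a}}$ still generates $G/N$ because $\gcd(3^{a},\lvert G/N\rvert)=1$. Consequently every $g\in G$ lies in some coset $(uN)^{j}$, so $g=xu^{j}$ with $x\in N$, giving $G=N\langle u\rangle$ with $H=\langle u\rangle$ finite cyclic of order coprime to three, exactly the hypothesis of Theorem~\ref{T:main}. It therefore remains to show that
\[ f:N\longrightarrow N,\qquad g\longmapsto ugu^{-1} \]
is an automorphism of $N$.

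Next I would invoke \cite{Beidar}: since $N$ is a finite simple group, the semi-automorphism $f$ is either an automorphism or an anti-automorphism, and I only need to exclude the latter when $N$ is nonabelian. The crucial observation is that, writing $k=\lvert G/N\rvert$, the element $u^{k}$ lies in $N$; hence by diassociativity $f^{k}(g)=u^{k}g(u^{k})^{-1}$ is precisely conjugation by $u^{k}$ inside the group $N$, so $f^{k}$ is an inner automorphism of $N$. Because $k$ is coprime to six it is odd, so if $f$ were an anti-automorphism then $f^{k}$ would again be an anti-automorphism. A map that is simultaneously an automorphism and an anti-automorphism forces $f^{k}(a)f^{k}(b)=f^{k}(b)f^{k}(a)$ for all $a,b\in N$, i.e. $N$ abelian, contradicting simplicity unless $N\cong\mathbb{Z}_{p}$. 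In that remaining case automorphisms and anti-automorphisms coincide, so $f$ is an automorphism anyway. In every case $f\in\mathrm{Aut}(N)$, and Theorem~\ref{T:main} yields that $G$ is a group.

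The main obstacle I anticipate is the step excluding the anti-automorphism case; everything hinges on the parity argument ``$f^{k}$ is inner and $k$ is odd,'' which is what converts the Beidar--Fong--Ke--Wu dichotomy into the single desired alternative. A secondary technical point is justifying the identity $f^{k}(g)=u^{k}g(u^{k})^{-1}$ and that this conjugation, although a priori formed in $G$, agrees with conjugation inside the subgroup $N$ --- this is exactly where Moufang diassociativity is used, since it guarantees that $\langle u,g\rangle$ and $\langle u^{k},g\rangle$ are genuine groups.
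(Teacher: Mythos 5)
Your proof is correct and takes essentially the route the paper intends: the paper gives no separate argument for this theorem, deriving it directly from Theorem~\ref{T:main} together with the Beidar--Fong--Ke--Wu dichotomy for semi-automorphisms of finite simple groups. The two points you work out explicitly --- stripping the $3$-part of the order of a preimage $v$ to produce the cyclic complement $\langle u\rangle$ of order coprime to three, and the parity argument (with $k=\lvert G/N\rvert$ odd and $f^{k}$ inner) that excludes a proper anti-automorphism --- are exactly the details the paper leaves implicit, and both are sound.
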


A counter example 
for the case where $G/N$ has an order that is not coprime to six can be seen in Example $\ref{E:cheinloop}$, namely, the Chein loop.
It is then evident that this theorem raises the level of interest to the question asking whether or not there exists an infinite simple group with a semi-automorphism that is neither an automorphism nor an anti-automorphism.

Another question that arises is whether or not there exists a generalization of Theorem \ref{T:main} to better understand extensions of commutative Moufang loops \cite{abelian}.
Furthermore, Theorem \ref{T:main} may generate a result similar to \mbox{Theorem \ref{T:simple group} involving cyclic} extensions of \mbox{nonassociative  simple} Moufang loops. 

\small

\end{document}